\newtheorem{theorem}{Theorem}[section]
\newtheorem{lemma}[theorem]{Lemma}
\numberwithin{equation}{section}
\newcommand{\head}[1]{{\sc #1}}
\newcommand{\jou}[4]{{\it #1} {\bf #2} (#3), #4.}
\newcommand{\DF}[1]{{\bf #1\/}}
\newcommand{\Pe}{\mathbb{P}}
\newcommand{\fett}[1]{\mathbf{#1}}
\newcommand{\mdo}[3]{#1 \equiv #2 \, \mathrm{(mod} \; #3 \mathrm{)}}
\newcommand{\set}[2]{\{#1 \;|\; #2 \}}
\newcommand{\ems}{\varnothing}
\newcommand{\sm}{\setminus}
\newcommand{\overbar}[1]{\mkern 1.5mu\overline{\mkern-1.5mu#1\mkern-1.5mu}\mkern 1.5mu}
\begin{document}

\title{\bf The Tournament Theorem of R\'{e}dei revisited}

\author[,1]{Thomas Schweser\thanks{Email: {\tt thomas.schweser@th-rosenheim.de}}}
\author[,2]{Michael Stiebitz\thanks{Email: {\tt michael.stiebitz@tu-ilmenau.de}}}
\author[,3]{Bjarne Toft\thanks{Email: {\tt btoft@imada.sdu.dk}}}

\affil[1]{Technische Hochschule Rosenheim, Rosenheim, Germany}
\affil[2]{Technische Universit\"at Ilmenau, Ilmenau, Germany}
\affil[3]{University of Southern Denmark, Odense, Denmark}

\date{}
\maketitle

\begin{abstract}
In 1934 L. R\'{e}dei published his famous theorem that the number of Hamiltonian paths in a tournament is odd. In fact it is a corollary of a stronger theorem in his paper. Stronger theorems were also obtained in the early 1970s by G.A. Dirac in his lectures at Aarhus University and by C. Berge in his monographs on graphs and hypergraphs. We exhibit the stronger theorems of R\'{e}dei, Dirac, and Berge and we explain connections between them. The stronger theorem of Dirac has two corollaries, one equivalent to R\'{e}dei's stronger theorem and the other related to Berge's stronger theorem.
\end{abstract}

\noindent{\small{\bf AMS Subject Classification:} 05C20, 05C45}

\noindent{\small{\bf Keywords:} Digraphs, Tournaments, Hamiltonian Path}

\section{Introduction}

When Beineke, Toft, and Wilson were working on \textit{Milestones in Graph Theory} \cite{Milestones}, they noticed that the 1934 paper by L. R\'{e}dei \cite{Redei} contains a stronger theorem than \textbf{R\'{e}dei's Theorem} that a tournament always has an odd number of Hamiltonian paths. The stronger theorem deals with mixed graphs. A \DF{mixed graph} $G$ is obtained from a (simple) graph $G'$ by orienting some of the edges of $G'$ from one of its ends to the other. A \DF{Hamiltonian path} in $G$ is a sequence of vertices of $G$ that contains each vertex of $G$ exactly once and in which each pair of consecutive vertices in the sequence is joined either by a non-oriented edge of $G$ or by an edge of $G$ oriented in the forward direction of the sequence. Note that a Hamiltonian path is not a subgraph but a sequence of vertices. An edge joining two consecutive vertices of the sequence is said to be \DF{contained} in the Hamiltonian path. If all edges contained in a Hamiltonian path are non-oriented, then the reverse sequence is a different Hamiltonian path.

\begin{theorem} \head{(R\'{e}dei's Stronger Theorem)} Let $T$ be a tournament with at least $2$ vertices. Add to $T$ a new non-empty set $W$ of vertices and some new non-oriented edges between $W$ and $T$ and between vertices of $W$.  Then the number of Hamiltonian paths in the new mixed graph $G$, beginning and ending in $T$, is even.
\label{theorem:redeistrong}
\end{theorem}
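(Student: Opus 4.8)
The goal is to show that the set $\mathcal P$ of Hamiltonian paths of the new graph $G$ that begin and end in $T$ has even cardinality (equivalently, to exhibit a fixed-point-free involution on $\mathcal P$). My plan is to reduce this, via the block structure of such a path, to R\'{e}dei's original theorem applied to sub-tournaments of $T$, together with an induction on $|W|$. The first step is to describe a path $P=v_1v_2\cdots v_n\in\mathcal P$: since $v_1,v_n\in T$ while $W\neq\varnothing$, the vertices of $W$ split $P$ into an alternating string of nonempty maximal runs
\[
B_0,\ C_1,\ B_1,\ C_2,\ \ldots,\ C_r,\ B_r\qquad(r\ge 1),
\]
where each $B_i$ spans a subset of $V(T)$ and is a directed path of $T$, each $C_j$ spans a subset of $W$ and is a path in the graph induced on $W$, and the only further constraints are the \emph{junction conditions}: the last vertex of each $B_{i-1}$ is joined to the first vertex of $C_i$ by an added edge, and the last vertex of $C_i$ to the first vertex of $B_i$. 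Thus $\mathcal P$ is in bijection with the set of data consisting of an ordered partition $(V_0,\ldots,V_r)$ of $V(T)$ into nonempty parts, an ordered partition $(W_1,\ldots,W_r)$ of $W$ into nonempty parts, and, for each part, a spanning path of the induced subgraph, all satisfying the junction conditions; the path $P$ is recovered by concatenation.

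Second, I would compute $|\mathcal P|\bmod 2$. The case $|W|=1$ is the heart of the matter: there $P=P_1\,w\,P_2$, and
\[
|\mathcal P|=\sum_{\varnothing\neq A\subsetneq V(T)}\alpha(A)\,\beta(V(T)\setminus A),
\]
where $\alpha(A)$ is the number of Hamiltonian paths of $T[A]$ ending at a vertex joined to $w$ and $\beta(B)$ the number of Hamiltonian paths of $T[B]$ starting at a vertex joined to $w$. Writing each Hamiltonian-path count as the total count (odd, by R\'{e}dei's theorem) minus the count of paths with the forbidden endpoint, one reduces the whole sum mod $2$ and checks that it collapses to an even quantity --- most transparently when $w$ is joined to every vertex of $T$, where the answer is $2^{|V(T)|}-2$, the general case following the same bookkeeping. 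For $|W|\ge 2$ I would induct: pick $w\in W$, classify a path in $\mathcal P$ by the two path-neighbours of $w$, and relate $|\mathcal P|$ to data about $G-w$ (which is again of the required form, with $W\setminus\{w\}$ nonempty), where the induction hypothesis supplies evenness.

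The main obstacle is that the added edges are arbitrary, so not every pair of vertices in $W\cup T$ is joined; consequently one cannot freely reorder a run or slide a vertex of $W$ past one of its neighbours, and the naive surgeries (reversing a segment, transposing two consecutive vertices) do not stay inside $G$ --- so no cheap involution is available, and in the counting the junction conditions genuinely couple neighbouring blocks, obstructing a block-by-block application of R\'{e}dei's theorem. The crux is therefore to show that, once one passes to counts mod $2$, these couplings wash out, so that the sum over skeletons telescopes to something manifestly even, the tournament blocks contributing through R\'{e}dei's theorem and the blocks of $W$ through the induction. Pinning down exactly why this happens --- the cancellation is global rather than skeleton-by-skeleton, as small examples already show --- is where the real work lies.
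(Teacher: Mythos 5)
Your block decomposition of a Hamiltonian path into alternating $T$-runs and $W$-runs is essentially the same first step as the paper's: there the $W$-vertices are grouped into a ``path system'' of maximal subpaths with both endvertices in $T$ and interior in $W$, and one sums over all such systems. The divergence --- and the gap --- lies in what is done with a fixed skeleton. The paper replaces, for each path of the system, the oriented edge of $T$ between its two endvertices by a non-oriented edge, obtaining a mixed graph on $V(T)$, and then invokes Dirac's Corollary 3 (the number of permutations using \emph{all} non-oriented edges and no wrongly oriented edge is even), which it has already established via inclusion--exclusion over subsets of reversed arcs and the parity lemma $N=r!\cdot 2^{p}$. That corollary is exactly the ``global cancellation'' you correctly identify as the crux but do not supply. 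Your proposed substitute --- R\'{e}dei's original theorem applied to the tournament blocks --- cannot do the job: the quantities $\alpha(A)$ and $\beta(B)$ in your $|W|=1$ computation count Hamiltonian paths of a sub-tournament with a \emph{prescribed set of admissible endpoints}, and R\'{e}dei's theorem controls neither their value nor their parity (already on three vertices, the number of Hamiltonian paths ending in a fixed two-element set is $1$ for the transitive tournament and $2$ for the cyclic one). So the computation ``odd times odd, summed over $2^{|V(T)|}-2$ subsets'' works only when $w$ is joined to all of $T$, and the assertion that ``the general case follows the same bookkeeping'' is precisely the unproved step.

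The induction on $|W|$ has the same problem one level up: deleting $w$ and recording its two path-neighbours $u,v$ does not yield an instance of the theorem for $G-w$. If $u,v\in T$ the pair $\{u,v\}$ is already an oriented edge of the tournament, so you cannot simply re-join them by a non-oriented edge without leaving the class of graphs the theorem covers; if one of $u,v$ lies in $W$, the contraction changes which pairs are joined, and the induction hypothesis does not apply to the resulting object as stated. In short, your skeleton decomposition is sound and matches the paper's, but the engine that makes the parities cancel is missing: the paper proves Dirac's Stronger Theorem first and feeds each skeleton through its Corollary 3, and a statement of that strength --- not R\'{e}dei's theorem itself --- is what your argument needs at exactly the point you flag as ``where the real work lies.''
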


Note that $0$ is an even number! A Hamiltonian path in the mixed graph $G$ in Theorem~\ref{theorem:redeistrong} contains a non-empty set of non-oriented edges and a set (perhaps empty) of oriented edges. 

\smallskip

Let $T$ be a tournament and let $(u,v)$ be an oriented edge of $T$. If we replace $(u,v)$ by $(v,u)$ we obtain a new tournament $T'$. Let $G$ be the mixed graph obtained from $T$ by joining a new vertex to $u$ and $v$ by two non-oriented edges. By R\'{e}dei's Stronger Theorem, the number of Hamiltonian paths in $G$, beginning and ending in $T$, is even. This implies that the number of Hamiltonian paths in $T$ and $T'$ containing the oriented edge between $u$ and $v$ are of the same parity. Consequently, the parity of the number of Hamiltonian paths in $T$ and $T'$ are the same, too. Since the transitive tournament has exactly one Hamiltonian path, it follows that $T$ has an odd number of Hamiltonian paths.

\smallskip

In \cite{Berge} and in its later editions and translations, C. Berge deduced R\'{e}dei's Theorem from a stronger result. 

\begin{theorem} \head{(Berge's Stronger Theorem)} Let $G$ be a mixed graph with at least $2$ vertices and let $\overbar{G}$ be its complement (a non-oriented edge and a non-edge with the same two ends are complements; an oriented edge $(u,v)$ and its reverse-oriented edge $(v,u)$ are complements). Then the numbers of Hamiltonian paths in $G$ and $\overbar{G}$ have the same parity - both even or both odd (both cases occur).
\label{theorem:bergestrong}
\end{theorem}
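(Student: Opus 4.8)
The plan is to work throughout with permutations in place of Hamiltonian paths, exactly as foreshadowed in the introduction, and to reduce the statement to one polynomial identity over $\mathbb{F}_2$. Write $V(G) = \{1, \dots, n\}$ and, for each ordered pair $(a,b)$ with $a \neq b$, introduce a variable $x_{ab}$ recording whether the \emph{arc} $a \to b$ is present in $G$, where ``present'' means that either $ab$ is an undirected edge or $G$ contains the directed edge from $a$ to $b$. Two observations make the dictionary work. First, complementation is arc-complementation: $a \to b$ is present in $\overbar{G}$ if and only if it is absent in $G$ (check the three cases: undirected edge versus non-edge, and the two orientations of a directed edge), so the vector $x^{\overbar{G}}$ is obtained from $x^{G}$ by flipping every coordinate. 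Second, a Hamiltonian path of a mixed graph, with the convention that a path using only undirected edges is counted twice, is precisely a permutation $\pi = (\pi(1), \dots, \pi(n))$ of the vertices all of whose arcs $\pi(i) \to \pi(i+1)$ are present: a path that uses a directed edge has no valid reversal, so the doubling is automatic. Hence, writing $h(G)$ for the number of Hamiltonian paths of $G$, we get $h(G) \equiv P(x^{G}) \pmod{2}$, where $P(x) = \sum_{\pi \in S_n} \prod_{i=1}^{n-1} x_{\pi(i)\pi(i+1)}$ is one fixed multilinear polynomial over $\mathbb{F}_2$. The theorem thus becomes the assertion that $P(x) = P(\mathbf{1}+x)$ for every $x \in \mathbb{F}_2^{\,n(n-1)}$.

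To prove this identity I would expand $P(\mathbf{1}+x) = \sum_{\pi \in S_n} \prod_{i=1}^{n-1}\bigl(1 + x_{\pi(i)\pi(i+1)}\bigr) = \sum_{\pi \in S_n} \sum_{S \subseteq E(\pi)} x^{S}$, where $E(\pi)$ is the arc set of the path $\pi$ and $x^{S} = \prod_{e \in S} x_e$; the terms with $S = E(\pi)$ reproduce $P(x)$, so over $\mathbb{F}_2$ we have $P(x) + P(\mathbf{1}+x) = \sum_{\pi \in S_n}\sum_{S \subsetneq E(\pi)} x^{S}$. Now collect by monomial: the coefficient of $x^{S}$ is the number of permutations $\pi$ with $S \subseteq E(\pi)$. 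Any $S$ occurring here is a vertex-disjoint union of directed paths with at most $n-2$ arcs, so it breaks the vertex set into $k := n - |S| \geq 2$ ``blocks'' (the path-components, isolated vertices counting as blocks); a permutation $\pi$ with $S \subseteq E(\pi)$ is obtained by linearly ordering these $k$ blocks and concatenating, and since $P$ sums over \emph{all} permutations every such concatenation is legal. The coefficient of $x^{S}$ is therefore $k!$, which is even because $k \geq 2$. Hence $P(x) + P(\mathbf{1}+x)$ is the zero polynomial over $\mathbb{F}_2$, which is the required identity.

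Chaining the pieces gives $h(G) \equiv P(x^{G}) = P(\mathbf{1}+x^{G}) = P(x^{\overbar{G}}) \equiv h(\overbar{G}) \pmod{2}$. It remains only to note that both parities genuinely occur: the graph on two vertices with a single undirected edge has $h = 2$ while its complement (two vertices, no edge) has $h = 0$, both even; the graph on two vertices with one directed edge has $h = 1$, as does its complement (the reversed edge), both odd. The only points needing care are in the first paragraph — verifying that ``Hamiltonian path with the doubling convention'' matches ``permutation with all consecutive arcs present'' and that complementation is coordinate-flipping — and the block count $(n - |S|)!$ with $n - |S| \geq 2$ in the second; once these are in place the argument is self-contained and uses neither R\'{e}dei's nor Dirac's stronger theorem.
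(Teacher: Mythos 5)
Your proof is correct, but it cannot be ``the same as the paper's'' for the simple reason that the paper contains no proof of Berge's Stronger Theorem: the result is quoted from Berge's monograph, translated into the permutation language as the assertion that $|\Pe_1|+|\Pe_2|$ is even, and the authors explicitly state that they see no easy passage between it and Dirac's Corollary~2 except in the degenerate cases $E_1=\ems$ or $E_2=\ems$. So you have supplied a self-contained argument for a statement the paper leaves to the literature. Your route --- encode a mixed graph by the indicator vector of its ``present'' arcs, check that complementation flips every coordinate, and prove the identity $P(x)=P(\mathbf{1}+x)$ for the mod-$2$ Hamiltonian-path polynomial by binomial expansion --- is sound at every step: the doubling convention for all-undirected Hamiltonian paths matches the permutation count exactly, distinct arcs of a path give a genuinely multilinear monomial, and a monomial $x^S$ with $|S|\le n-2$ receives coefficient $(n-|S|)!$, which is even, or $0$ if $S$ is not a disjoint union of directed paths. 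It is worth noticing that this argument is structurally an inclusion--exclusion in disguise and runs parallel to the paper's proof of Dirac's Stronger Theorem: your count $(n-|S|)!$ is exactly the $p=0$ case of the count $N=r!\cdot 2^{p}$ in Lemma~\ref{lemma:dirac} (all of your constraints are directed arcs, so no block can be reversed, whence no factor $2^{p}$), and your expansion of $P(\mathbf{1}+x)$ plays the role of the signed sum over $D\subseteq\overbar{E_3}$ in the proof of Theorem~\ref{theorem:diracstrong}. What your approach buys is a short, direct proof of Berge's theorem independent of the Dirac machinery; what it deliberately gives up is the finer information in $N_A\equiv N_{=A}$, which requires keeping $E_1$ and $E_2$ separate rather than merging them into a single ``arc present'' relation. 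Your closing examples on two vertices correctly witness that both parities occur.
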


Asking Carsten Thomassen whether R\'{e}dei's Stronger Theorem is generally known, he informed us that G.~A. Dirac in lectures at Aarhus University in the early 1970s also proved stronger versions of R\'{e}dei's Theorem. Consulting Dirac's handwritten lecture notes \cite{Dirac} we have generalized his approach and proved what we shall call \textbf{Dirac's Stronger Theorem}. It has a special case equivalent to R\'{e}dei's Stronger Theorem. One attractive corollary of Dirac's Stronger Theorem is the following result.

\begin{theorem} \head{(Dirac's Corollary 1)} Let $G$ be a complete mixed graph on at least $2$ vertices (each pair of vertices is joined by either a non-oriented edge or by an oriented edge). Then the number of Hamiltonian paths containing at least one non-oriented edge is even.
\label{theorem:diraccorollary1}
\end{theorem}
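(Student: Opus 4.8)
The plan is to deduce this from Berge's Stronger Theorem (Theorem~\ref{theorem:bergestrong}). Write $n = |V(G)| \ge 2$, and regard a Hamiltonian path of $G$ as a permutation $v_1 v_2 \cdots v_n$ of $V(G)$ in which, for every $i$, the pair $v_i v_{i+1}$ is either a non-oriented edge of $G$ or an oriented edge directed from $v_i$ to $v_{i+1}$; an all-non-oriented path is, as in the statement, counted together with its reverse. Split these paths into the set $A$ of those using at least one non-oriented edge (the quantity we must show is even) and the set $B$ of those using only oriented edges. First I would describe $\overbar{G}$ explicitly: since $G$ is complete mixed it has no non-edges, so in $\overbar{G}$ every non-oriented edge of $G$ becomes a non-edge and every oriented edge is reversed. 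In particular $\overbar{G}$ has no non-oriented edges, so the count of its Hamiltonian paths is unambiguous (the doubling convention is vacuous there, as it will be for $B$).

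Next I would set up the reversal bijection between the Hamiltonian paths of $\overbar{G}$ and the members of $B$. If $u_1 u_2 \cdots u_n$ is a Hamiltonian path of $\overbar{G}$, then, all of its edges being oriented, $u_i \to u_{i+1}$ in $\overbar{G}$, i.e.\ $u_{i+1} \to u_i$ in $G$, for every $i$; hence the reversed sequence $u_n u_{n-1} \cdots u_1$ is a Hamiltonian path of $G$ using only oriented edges, i.e.\ a member of $B$. Conversely, reversing a member of $B$ yields a Hamiltonian path of $\overbar{G}$, and reversal is an involution, so this is a bijection. Consequently the number of Hamiltonian paths of $\overbar{G}$ equals $|B|$.

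Finally, the total number of Hamiltonian paths of $G$ (counted as in the statement) is $|A| + |B|$, and by Berge's Stronger Theorem this is congruent modulo $2$ to the number of Hamiltonian paths of $\overbar{G}$, which we have just seen equals $|B|$. Hence $|A| + |B| \equiv |B| \pmod 2$, i.e.\ $|A|$ is even, as required. I do not expect a real obstacle once Berge's theorem is granted; the points needing care are verifying that $\overbar{G}$ indeed has no non-oriented edges (so that its parity is well defined without any doubling) and checking that reversal matches the Hamiltonian paths of $\overbar{G}$ exactly with the oriented-only Hamiltonian paths of $G$, with no interference from the doubling convention on either side. A proof avoiding Berge's theorem would instead have to reprove a Berge/Dirac-type parity statement directly — for instance via the adjacent-transposition argument on permutations alluded to in the text — and that is where the genuine content would lie.
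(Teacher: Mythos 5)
Your argument is correct, but it takes a genuinely different route from the paper. The paper obtains Dirac's Corollary 1 as an immediate specialization of Dirac's Stronger Theorem (Theorem~\ref{theorem:diracstrong}) with $A=\ems$ and $E_{1}=\ems$: there $N_{A}$ counts all Hamiltonian paths (as permutations avoiding $\overbar{E_{3}}$), $N_{=A}$ counts those using only oriented edges, and the theorem --- proved from scratch via Lemma~\ref{lemma:dirac} and inclusion-exclusion --- says their difference is even. You instead deduce the corollary from Berge's Stronger Theorem (Theorem~\ref{theorem:bergestrong}), observing that for a complete mixed graph the complement $\overbar{G}$ has no non-oriented edges and that reversal puts its Hamiltonian paths in bijection with the oriented-only Hamiltonian paths of $G$; your handling of the doubling convention (vacuous on the $\overbar{G}$ side and on $B$, absorbed into $A$ on the $G$ side) and of the reversal bijection is careful and checks out. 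In fact this is essentially the remark the paper itself makes in Section~4: when $E_{1}=\ems$ one has $|\Pe_{0}|=|\Pe_{1}|$ and $|\Pe_{2}|=|\Pe_{3}|$, so Dirac's Corollary 2 and Berge's result coincide, and your proof is the complete-graph instance of that coincidence. What your route buys is brevity; what it costs is self-containedness, since Berge's Stronger Theorem is only quoted in this paper (from Berge's monograph), not proved, whereas the paper's derivation rests entirely on a theorem it establishes in full. As you acknowledge yourself, the genuine content is thereby relocated into Berge's theorem rather than eliminated.
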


In the following we shall state and prove Dirac's Stronger Theorem and obtain relations to the stronger theorems of R\'{e}dei and Berge.

\section{Dirac's Stronger Theorem}

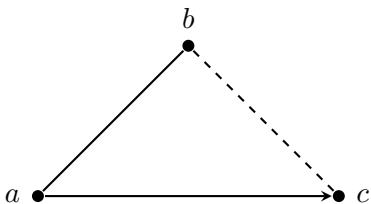
\begin{figure}[htbp]
\centering
\begin{tikzpicture}[>=stealth,thick]
  \node[circle,fill,inner sep=1.6pt,label=left:$a$]  (a) at (0,0)   {};
  \node[circle,fill,inner sep=1.6pt,label=above:$b$] (b) at (2,2) {};
  \node[circle,fill,inner sep=1.6pt,label=right:$c$] (c) at (4,0)   {};

  \draw (a) -- (b);          
  \draw[dashed] (b) -- (c);  
  \draw[->] (a) -- (c);      
\end{tikzpicture}

%
%
\caption{A mixed graph $G$ with $n=3$ vertices.}
\label{figure:mixedgraph}       
\end{figure}

In a \DF{mixed graph} $G$ each pair of vertices constitutes either a non-edge, a non-oriented edge, or an oriented edge in one of the two possible directions. We denote the set of non-edges in $G$ by $E_{1}$, the set of non-oriented edges in $G$ by $E_{2}$ and the set of oriented edges in $G$ by $E_{3}$. Moreover, by $\overbar{E_{3}}$ we denote the set $E_{3}$ with all orientations reversed. A \DF{permutation} $\fett{x}$ of $G$ is an ordering of the vertices of the mixed graph $G$ on $n$ vertices ($n \geq 2$):
$$\fett{x}=(x_{1}, x_{2}, \ldots , x_{i}, x_{i+1}, \ldots , x_{n}).$$
For the \DF{neighboring pair} $x_ix_{i+1}$ of the permutation $\fett{x}$ we have the following possibilities. Either the unordered pair 
$\{x_i,x_{i+1}\}$ belongs to $E_1 \cup E_2$, or the ordered pair $(x_i,x_{i+1})$ belongs to $E_3 \cup \overbar{E_{3}}$. In the former case, we set $e(x_ix_{i+1})=\{x_i,x_{i+1}\}$ (i.e., $e(x_ix_{i+1})$ is a non-edge or a non-oriented edge of $G$). In the latter case we set $e(x_ix_{i+1})=(x_i,x_{i+1})$ (i.e., $e(x_ix_{i+1})\in E_3$ is an oriented edge of $G$ and has the \DF{correct direction} or
$e(x_ix_{i+1})\in \overbar{E_{3}}$ is an oriented edge of $\overbar{G}$ and has the \DF{wrong direction}). We define
$E(\fett{x})=\set{e(x_ix_{i+1})}{i\in \{1,2, \ldots, n-1\}}$. An element $e\in E_1\cup E_2 \cup E_3 \cup \overbar{E_{3}}$ is \DF{contained} in $\fett{x}$ if $e\in E(\fett{x})$. Furthermore, we denote by $\Pe(G)$ the set of permutations of $G$. Note that if $G$ has $n\geq 2$ vertices, then $|E_1|+|E_2|+|E_3|=\binom{n}{2}$, $|\Pe(G)|=n!$ and $|E(\fett{x})|=n-1$ for every $\fett{x}\in \Pe(G)$. A \DF{Hamiltonian path} of $G$ is a permutation $\fett{x}\in \Pe(G)$ such that $E(\fett{x})\subseteq E_2 \cup E_3$.

\medskip

Let $G$ be the mixed graph with $n=3$ vertices depicted in Figure~\ref{figure:mixedgraph}. Then $E_1=\{\{b,c\}\}$, $E_2=\{\{a,b\}\}$, $E_3=\{(a,c)\}$, and $\overbar{E_{3}}=\{(c,a)\}$. For the permutation $\fett{x}=(c,a,b)\in \Pe(G)$, we obtain that $E(\fett{x})=\{(c,a),\{a,b\}\}=\overbar{E_{3}}\cup E_2$. If $\Pe'=\set{\fett{x}\in \Pe(G)}{\{a,b\}\in E(\fett{x})}$, then  $\Pe'=\{\fett{x}^1=(a,b,c), \fett{x}^2=(b,a,c), \fett{x}^3=(c,a,b), \fett{x}^4=(c,b,a) \}$. The only Hamiltonian path of $G$ is the permutation $\fett{x}=(b,a,c)$.

With the above notation we can now state Dirac's Stronger Theorem.

\begin{theorem} \head{(Dirac's Stronger Theorem)} Let $G$ be a mixed graph with $n \geq 2$ vertices. Let $A$ be a subset of $E_{1} \cup E_{2}$. Let $N_{A}$ denote the number of permutations of $G$ containing all elements of $A$ and no element from $\overbar{E_{3}}$. Let $N_{=A}$ denote the number of permutations of $G$ containing precisely $A$ as its elements from $E_{1} \cup E_{2}$ and containing no element from $\overbar{E_{3}}$. Then $N_{A}$ and  $N_{=A}$ have the same parity. In particular $N_{A} - N_{=A}$ is even.
\label{theorem:diracstrong}
\end{theorem}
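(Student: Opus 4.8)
The plan is to reduce the statement to the evenness of a single explicitly described set of permutations, dispatch the ``easy half'' by the reversal involution, and then confront the remaining half, which is where the difficulty lies.

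First, two harmless reductions. If $A$, viewed as a graph on the vertex set of $G$, is not a linear forest, then no permutation can have all elements of $A$ among its neighboring pairs (the neighboring pairs of a permutation always form a linear forest), so $N_A=N_{=A}=0$ and there is nothing to prove; hence I may assume $A$ is a disjoint union of paths. Also, nothing in the statement distinguishes $E_1$ from $E_2$: neither $\Pe(G)$, nor the relation $e\in E(\fett{x})$, nor $\overbar{E_3}$ changes if a pair is moved between $E_1$ and $E_2$, so I may write $E_u=E_1\cup E_2$ for the set of non-oriented pairs and ignore which of them are non-edges. Unwinding the definitions then shows that $N_A-N_{=A}$ is precisely the number of permutations $\fett{x}$ that contain every element of $A$, contain no element of $\overbar{E_3}$, and contain at least one element of $E_u\sm A$. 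Writing $\mathcal{S}$ for this set, the theorem is exactly the assertion that $|\mathcal{S}|$ is even, and it suffices to produce a fixed-point-free involution on $\mathcal{S}$.

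The structural picture I would use is this: in any $\fett{x}\in\mathcal{S}$ the path-components of $A$ occur as contiguous blocks, so every neighboring pair of $\fett{x}$ is either an $A$-edge inside a block or a ``junction'' between two consecutive blocks; the condition $E(\fett{x})\cap\overbar{E_3}=\ems$ says that no junction is a wrongly-oriented edge, and $E(\fett{x})\cap(E_u\sm A)\neq\ems$ says that at least one junction lies in $E_u$. The natural candidate for the involution is full reversal $\fett{x}\mapsto(x_n,\dots,x_1)$: since $n\ge 2$ it is fixed-point free, and it preserves the set of neighboring pairs together with the property of each being non-oriented, hence it preserves ``contains all of $A$'' and ``contains an element of $E_u\sm A$''; the only property it can destroy is ``contains no element of $\overbar{E_3}$'', and it destroys it exactly when $\fett{x}$ has a correctly-oriented edge among its neighboring pairs (reversal turns such a pair into a wrongly-oriented one). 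Thus reversal already pairs up every member of $\mathcal{S}$ that has no oriented neighboring pair, so $|\{\fett{x}\in\mathcal{S}:E(\fett{x})\cap E_3=\ems\}|$ is even, and everything reduces to showing that $|\{\fett{x}\in\mathcal{S}:E(\fett{x})\cap E_3\neq\ems\}|$ is even.

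This last point is the step I expect to be the main obstacle: every local modification near a correctly-oriented neighboring pair — transposing its two endpoints, or reversing the block on either side of it — threatens either to turn that pair wrongly-oriented, or to create a wrongly-oriented junction, or to destroy an $A$-edge, so there is no obvious direct involution. The route I would take is induction on $|E_3|$. Picking an oriented pair $g=\{u,w\}\in E_3$, directed $u\to w$, and letting $G'$ be $G$ with $g$ made non-oriented (so $|E_3(G')|=|E_3|-1$), one classifies the permutations according to whether $\{u,w\}$ is a neighboring pair and, if so, in which order; this expresses $N_A(G)-N_{=A}(G)$ in terms of $N_A(G')-N_{=A}(G')$ — to which the induction hypothesis applies — together with correction terms built from counts of permutations in which $u\to w$ (respectively $\{u,w\}$) is forced to be a neighboring pair, the latter again reducible to counts in $G'$ and, after contracting the forced pair, to instances of the induction hypothesis. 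The genuinely delicate part, which I would not expect to be routine, is identifying and proving the exact residual evenness statement left after these cancellations: it concerns permutations that still carry oriented neighboring pairs, so it cannot just be read off from reversal, and pinning it down seems to require carefully tracking how forcing the pair $\{u,w\}$ interacts with the ``contains precisely $A$'' condition throughout the case analysis.
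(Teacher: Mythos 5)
Your setup is sound: the identification of $N_A-N_{=A}$ with the number of permutations containing all of $A$, no element of $\overbar{E_{3}}$, and at least one element of $(E_1\cup E_2)\sm A$ is correct, and the reversal involution does pair off those members of this set whose neighboring pairs are all non-oriented. But that sub-case is the easy part, and what remains --- showing that the permutations in your set $\mathcal{S}$ with at least one correctly-oriented neighboring pair also come in pairs --- carries essentially the full difficulty of the theorem. At that point you offer only a plan (induction on $|E_3|$, un-orienting one edge $g=(u,w)$ and classifying permutations by whether $\{u,w\}$ is a neighboring pair and in which order), and you explicitly state that you cannot identify, let alone prove, the residual evenness statement that the cancellations would leave behind. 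That unproved residue is the theorem's actual content, so as it stands the proposal has a genuine gap rather than a complete argument. A concrete worry about the plan itself: passing from $G$ to $G'$ changes both $E_1\cup E_2$ (so the meaning of ``precisely $A$'' shifts) and $\overbar{E_{3}}$ (so permutations previously excluded for containing $(w,u)$ become admissible), and these two perturbations interact; you would need to relate $N_A(G)$ and $N_{=A}(G)$ simultaneously to the $G'$-quantities for $A$ and for $A\cup\{\{u,w\}\}$, and it is not evident that the bookkeeping closes up.

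For comparison, the paper takes a different and self-contained route that sidesteps the induction entirely. Its key lemma counts, for any $D\subseteq \overbar{E_{3}}$, the permutations containing all of $A\cup D$: this number is $r!\cdot 2^{p}$ (where $r$ is the number of sub-permutations, isolated vertices included, and $p$ the number of reversible ones), hence even except in the single degenerate case $r=1$, $p=0$, where it equals $1$. Inclusion--exclusion over all $D\subseteq\overbar{E_{3}}$ then expresses $N_A$ as an alternating sum in which, modulo $2$, only these degenerate terms survive, and a global reversal bijection identifies their count with $N_{=A}$. If you want to salvage your approach, the most promising move is to abandon the induction and instead apply inclusion--exclusion over the forbidden set $\overbar{E_{3}}$ directly, which is exactly where the $r!\cdot2^{p}$ count does the work your reversal involution cannot.
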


Dirac's Corollary 1 follows from Dirac's Stronger Theorem with $A$ and $E_{1}$ both empty. To prove the theorem we use the following lemma (with the notation from Dirac's Stronger Theorem).

\begin{lemma} Let $G$ be a mixed graph with $n \geq 2$ vertices. Let $A$ be a subset of $E_{1} \cup E_{2}$ and let $D$ be a subset of $\overbar{E_{3}}$. Let  $N$ be the number of permutations of $G$ containing all elements of $A \cup D$. Then $N$ is even, except when  $|A| + |D| = n-1$, $|D|\geq 1$, and $A \cup D=E(\fett{x})$ for a permutation $\fett{x}\in \Pe(G)$, in which case $N = 1$.
\label{lemma:dirac}
\end{lemma}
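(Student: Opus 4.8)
The plan is to count, for the fixed set $A\cup D$, the permutations $\fett{x}\in\Pe(G)$ with $A\cup D\subseteq E(\fett{x})$ directly, from the combinatorial shape of $A\cup D$. I would regard $A\cup D$ as a graph $H$ on the vertex set of $G$, in which a pair of $A$ is an ordinary edge, a pair $(u,v)\in D$ is the edge $\{u,v\}$ carrying the forced direction $u\to v$, and a vertex of $G$ not met by $A\cup D$ counts as an isolated component. If $\fett{x}$ contains all of $A\cup D$, then the two ends of every pair in $A\cup D$ are consecutive in $\fett{x}$, and this forces each connected component of $H$ to occupy a consecutive block of $\fett{x}$; hence each component must be a path (no cycles, no vertex of degree $\geq 3$) and, if it carries any $D$-pair, those prescribed directions must be mutually consistent. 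If either requirement fails there is no such $\fett{x}$ at all, so $N=0$, which is even, and we are done.

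In the remaining case, let $P_1,\dots,P_m$ be the components of $H$, and for each $j$ let $r_j$ be the number of admissible traversal directions of $P_j$: $r_j=2$ if $P_j$ has at least two vertices and contains no $D$-pair, and $r_j=1$ otherwise (a component carrying a $D$-pair has a single admissible direction since it is consistently oriented here; a one-vertex component has one). Every $\fett{x}$ containing $A\cup D$ is obtained exactly once by choosing a linear order of the blocks $P_1,\dots,P_m$ and then a traversal direction for each, so
\[
N \;=\; m!\,\prod_{j=1}^{m} r_j .
\]
Since $H$ is a vertex-disjoint union of $m$ paths spanning all $n$ vertices with $|A|+|D|$ edges in total, $|A|+|D|=n-m$; in particular $m=1$ is equivalent to $|A|+|D|=n-1$.

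Now I read off the parity. If $m\geq 2$ then $m!$ is even, hence so is $N$. If $m=1$ then $P_1$ spans all $n\geq 2$ vertices and $|A|+|D|=n-1$: when $D=\varnothing$ the block carries no $D$-pair, so $r_1=2$ and $N=2$ is even; when $D\neq\varnothing$ the block carries a $D$-pair, so $r_1=1$ and $N=1$, and the unique traversal $\fett{x}$ of $P_1$ satisfies $E(\fett{x})=A\cup D$. Thus $N$ is even in all cases except this last one, which is exactly the exceptional case of the statement ($|A|+|D|=n-1$, $|D|\geq 1$, and $A\cup D=E(\fett{x})$ for some $\fett{x}\in\Pe(G)$), and there $N=1$.

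The delicate point, and the main thing to pin down carefully, is the orientation bookkeeping: one must check that the ``(order of blocks) $\times$ (traversal of each block)'' recipe produces each admissible permutation exactly once, and that the exceptional case of the lemma coincides precisely with ``$m=1$, $D\neq\varnothing$, all $D$-pairs consistently oriented.'' It is also worth recording at the outset that for this lemma the distinction between $E_1$ and $E_2$ is immaterial — a pair of $A$ is merely required to be consecutive in either order — and that a pair $(u,v)\in D$ only demands that $u$ immediately precede $v$ in $\fett{x}$, regardless of how $G$ itself orients that edge; this is what allows $D$ to be treated purely as a direction constraint, with the ambient orientations in $E_3$ playing no role.
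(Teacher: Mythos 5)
Your proof is correct and follows essentially the same route as the paper: both decompose $A\cup D$ into disjoint path components (counting untouched vertices as trivial components) and arrive at the same product formula --- your $N=m!\prod_j r_j$ is exactly the paper's $N=r!\cdot 2^{p}$, with $m=r$ and $\prod_j r_j=2^{p}$ --- before reading off the parity and the single exceptional case in the same way.
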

\begin{proof}
Clearly, $N = 0$ except perhaps when $A \cup D$ forms the elements of disjoint sub-permutations of $G$, say  $p$ sub-permutations without any element from $D$ and $q$ sub-permutations with at least one element from $D$. In particular $|A|+|D| \leq n-1$. If we also consider each single vertex, not incident with any element of $A \cup D$, as a sub-permutation, then let the total number of sub-permutations be $r$ (in fact $r = n - |A| - |D|$). Then

$$N = r! \cdot 2^{p}.$$
It follows that $N$ is even, except when $r = 1$, $p=0$, and hence $|A|+|D| = n-1$, $|D|\geq 1$, and $A \cup D=E(\fett{x})$ for a permutation $\fett{x}\in \Pe(G)$. In this exceptional case $N = 1$. This proves the lemma.
\end{proof}

\begin{proof}[Proof of Theorem~\ref{theorem:diracstrong}]
A permutation of $G$ contains $n-1$ elements. Hence, if $|A|\geq n$, then $N_{A} = N_{=A}=0$ and if $|A|= n-1$, then $N_{A} = N_{=A}.$
So assume that $|A|\leq n-2$. We may also assume that $A$ forms the elements of $p$ disjoint sub-permutations, for otherwise we have $N_{A} = N_{=A}=0$. By Lemma~\ref{lemma:dirac}, for every set $D\subseteq \overbar{E_{3}}$, the number $M(D)$ of permutations of $G$ containing $A\cup D$ is even, except when $|D|=n-1-|A|$ and $A\cup D=E(\fett{x})$ for a permutation $\fett{x} \in \Pe(G)$, in which case $M(D)=1$. Since $N_A$ is the number of permutations of $G$ containing $A$ and no element of $\overbar{E_{3}}$, the inclusion-exclusion principle leads to
$$N_A=\sum_{D\subseteq \overbar{E_{3}}, |D|\leq n-1-|A|}(-1)^{|D|}M(D)$$
Consequently, modulo 2, $N_A$ is congruent to the number of sets $D$ of $n-1-|A|$ elements from $\overbar{E_{3}}$ that together with $A$ forms the elements of a permutation of $G$; this number is equal to $N_{=A}$ (by reversing the orders of the permutations). Hence, we have
$\mdo{N_A}{N_{=A}}{2}$.
\end{proof}

The two cases of Theorem~\ref{theorem:diracstrong}, where $A = \ems$ or $A = (E_{1} \cup E_{2}) \sm \{e\}$ for an element $e  \in  E_{1} \cup E_{2}$, imply the following two corollaries.

\begin{theorem} \head{(Dirac's Corollary 2)} Let $G$ be a mixed graph with $n \geq 2$ vertices. Let $N_\ems$ denote the number of permutations $\fett{x}\in \Pe(G)$ satisfying $E(\fett{x}) \subseteq E_{1} \cup E_{2} \cup E_{3}$. Let $N_{=\ems}$ denote the number of permutations containing no elements from $E_{1} \cup E_{2} \cup \overbar{E_3}$. Then $N_\ems$ and  $N_{=\ems}$ have the same parity. Hence $N_\ems - N_{=\ems}$ is even, i.e., the number of permutations of $G$ containing at least one element from $E_{1} \cup E_{2}$, but no element from $\overbar{E_{3}}$  is even.
\label{theorem:diraccorollary2}
\end{theorem}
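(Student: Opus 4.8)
\textbf{Proof proposal for Theorem~\ref{theorem:diraccorollary2} (Dirac's Corollary 2).}
The plan is simply to specialize Dirac's Stronger Theorem (Theorem~\ref{theorem:diracstrong}) to the case $A = \ems$ and then to read off the combinatorial content of the resulting parity identity. First I would verify that the quantities $N_\ems$ and $N_{=\ems}$ defined in the statement of this corollary are exactly the quantities $N_A$ and $N_{=A}$ of Theorem~\ref{theorem:diracstrong} when $A=\ems$. For $N_A$ this is immediate: every permutation $\fett{x}\in\Pe(G)$ contains all elements of $\ems$ vacuously, so $N_\ems$, the number of $\fett{x}$ with $E(\fett{x})\subseteq E_1\cup E_2\cup E_3$, is precisely the number of permutations that contain no element of $\overbar{E_3}$, which is $N_A$ with $A=\ems$. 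For $N_{=A}$ one must unwind the phrase ``containing precisely $\ems$ as its elements from $E_1\cup E_2$'': this means $E(\fett{x})\cap(E_1\cup E_2)=\ems$, and together with ``no element of $\overbar{E_3}$'' it is equivalent to $E(\fett{x})\subseteq E_3$, i.e.\ to $\fett{x}$ containing no element of $E_1\cup E_2\cup\overbar{E_3}$; hence $N_{=\ems}=N_{=A}$ with $A=\ems$.

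Once this identification is in place, Theorem~\ref{theorem:diracstrong} gives $\mdo{N_\ems}{N_{=\ems}}{2}$ directly, so $N_\ems-N_{=\ems}$ is even. The remaining step is the combinatorial interpretation of this difference. Every permutation counted by $N_{=\ems}$ satisfies $E(\fett{x})\subseteq E_3$, hence in particular contains no element of $\overbar{E_3}$, so it is also counted by $N_\ems$; thus the set of permutations counted by $N_{=\ems}$ is a subset of the set counted by $N_\ems$. Therefore $N_\ems-N_{=\ems}$ counts exactly those permutations $\fett{x}$ with $E(\fett{x})\cap\overbar{E_3}=\ems$ and $E(\fett{x})\cap(E_1\cup E_2)\neq\ems$, i.e.\ permutations containing at least one element from $E_1\cup E_2$ but no element from $\overbar{E_3}$. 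Since this number equals $N_\ems-N_{=\ems}$, it is even, which is the assertion.

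Since the corollary is a direct specialization of Theorem~\ref{theorem:diracstrong}, I do not expect a genuine obstacle; the only point that needs care is the bookkeeping in the first paragraph, namely confirming that when $A=\ems$ the condition ``containing precisely $A$ as its elements from $E_1\cup E_2$'' together with ``no element of $\overbar{E_3}$'' is equivalent to the stated condition that $\fett{x}$ contains no element of $E_1\cup E_2\cup\overbar{E_3}$, and that the two ground sets being compared are genuinely nested so that the difference $N_\ems-N_{=\ems}$ really counts a set of permutations rather than a signed quantity.
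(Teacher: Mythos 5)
Your proposal is correct and is exactly the paper's argument: the paper derives Dirac's Corollary 2 by specializing Theorem~\ref{theorem:diracstrong} to the case $A = \ems$, which is precisely what you do. Your extra bookkeeping (checking that $N_\ems$ and $N_{=\ems}$ coincide with $N_A$ and $N_{=A}$ for $A=\ems$, and that the counted sets are nested so the difference is a genuine cardinality) is a correct and slightly more explicit elaboration of a step the paper leaves implicit.
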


\begin{theorem} \head{(Dirac's Corollary 3)} Let $G$ be a mixed graph with $n \geq 2$ vertices and with $E_{1} \cup E_{2} \neq \ems$, let $e\in E_1 \cup E_2$, and let $A = (E_{1} \cup E_{2}) \sm \{e\}$. Then $N_{A}$ and $N_{=A}$ have the same parity. Hence $N_{A} - N_{=A}$ is even, i.e. the number of permutations of $G$ containing all elements from $E_{1} \cup E_{2}$, but no element from $\overbar{E_{3}}$  is even. 
\label{theorem:diraccorollary3}
\end{theorem}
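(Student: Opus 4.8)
I would derive this statement directly from Dirac's Stronger Theorem (Theorem~\ref{theorem:diracstrong}) together with a short reinterpretation of the quantity $N_A - N_{=A}$, so the plan has essentially two steps.

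First I would observe that the hypotheses $E_{1} \cup E_{2} \neq \ems$ and $e \in E_{1} \cup E_{2}$ guarantee that $A = (E_{1} \cup E_{2}) \sm \{e\}$ is a well-defined subset of $E_{1} \cup E_{2}$, so Theorem~\ref{theorem:diracstrong} applies verbatim to this particular $A$ and yields $\mdo{N_A}{N_{=A}}{2}$; in particular $N_A - N_{=A}$ is even. This already settles the first assertion of the corollary, and no degenerate case needs separate handling, since Theorem~\ref{theorem:diracstrong} covers all sizes of $A$ (when $|A| \geq n-1$ one even has $N_A = N_{=A}$ outright).

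The second step is bookkeeping. By definition $N_A$ counts the permutations $\fett{x}\in \Pe(G)$ with $A \subseteq E(\fett{x})$ and $E(\fett{x}) \cap \overbar{E_{3}} = \ems$, while $N_{=A}$ counts those among them for which $E(\fett{x}) \cap (E_{1} \cup E_{2}) = A$ exactly. Hence $N_A - N_{=A}$ is the number of permutations $\fett{x}$ with $A \subseteq E(\fett{x})$, $E(\fett{x}) \cap \overbar{E_{3}} = \ems$, and $E(\fett{x}) \cap (E_{1} \cup E_{2})$ strictly larger than $A$. Since $(E_{1} \cup E_{2}) \sm A = \{e\}$, that last condition is equivalent to $e \in E(\fett{x})$, i.e., to $E_{1} \cup E_{2} = A \cup \{e\} \subseteq E(\fett{x})$. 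Thus $N_A - N_{=A}$ equals the number of permutations of $G$ that contain every element of $E_{1} \cup E_{2}$ and no element of $\overbar{E_{3}}$, and by the first step this number is even, which is the ``i.e.'' form of the statement. (If $|E_{1} \cup E_{2}| > n-1$ this count is vacuously $0$, consistent with the claim.)

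I do not expect a real obstacle here: the mathematical content sits entirely in Theorem~\ref{theorem:diracstrong}, and the only thing to get right is the identification of $N_A - N_{=A}$ with the advertised count, namely checking that ``$E(\fett{x})$ meets $E_{1} \cup E_{2}$ in more than $A$'' collapses to ``$e \in E(\fett{x})$'' precisely because $A$ omits the single edge $e$. An alternative, even shorter route would be to invoke Theorem~\ref{theorem:diracstrong} for the special $A = (E_{1} \cup E_{2}) \sm \{e\}$ and stop, treating the ``i.e.'' clause as the above reformulation; but spelling out the difference set as I have indicated makes the equivalence of the two phrasings transparent.
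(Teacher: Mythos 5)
Your proposal is correct and follows the paper's own route exactly: the paper derives Dirac's Corollary 3 simply by applying Dirac's Stronger Theorem with $A = (E_{1} \cup E_{2}) \sm \{e\}$, and your additional bookkeeping identifying $N_A - N_{=A}$ with the number of permutations containing all of $E_{1} \cup E_{2}$ and no element of $\overbar{E_{3}}$ is accurate (and in fact more explicit than what the paper writes). Nothing is missing.
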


\section{R\'{e}dei's Stronger Theorem}

First, we prove R\'{e}dei's Stronger Theorem using Dirac's Corollary 3. Let $G$ be a mixed graph and $W$ be a non-empty vertex set in $G$ such that $T=G-W$ is a tournament with at least two vertices and every edge incident with a vertex in $W$ is non-oriented. Furthermore, let $\Pe'\subseteq \Pe(G)$ denote the set of Hamiltonian paths of $G$, beginning and ending in $T$. Our aim is to show that $|\Pe'|$ is even. To this end, let $E_2$ denote the set of non-oriented edges of $G$, let $E_2'$ be an arbitrary subset of $E_2$, and let $\Pe'_2=\set{\fett{x}\in \Pe'}{E(\fett{x})\cap E_2=E_2'}$. It suffices to show that $|\Pe_2'|$ is even, since
$\Pe'$ is the disjoint union of such sets. If $\Pe_2'=\ems$, this is evident. So assume that $\Pe_2'\not=\ems$. Then $E_2'\not=\ems$ and the subgraph $G_2'$ of $G$, whose edge set is $E_2'$ and whose vertex set consists of the vertices of $G$ incident with an edge in $E_2'$, is the disjoint union of paths. Each such path joins two vertices of $T$, and every vertex of $W$ belongs to exactly one path of $G_2'$. Let $T_2$ be the mixed graph obtained from the tournament $T$ by replacing an oriented edge $(u,v)$ of $T$ by a non-oriented edge $\{u,v\}$ if and only if $u$ and $v$ are joined by a subpath of $G_2'$ with all its internal vertices from $W$. Let $\Pe_2$ denote the set of Hamiltonian paths in $T_2$ that contain all of its non-oriented edges. Since $T_2$ is a complete mixed graph,  Dirac's Corollary 3 implies that $|\Pe_2|$ is even. It is easy to see that there is a one-to-one correspondence between the permutations of $\Pe_2'$ and $\Pe_2$, so $|\Pe_2'|$ is even, too, as required. This proves R\'{e}dei's Stronger Theorem. 

Consider conversely the situation in Dirac's Corollary 3. For each element $\{u,v\}$ from $E_{1} \cup E_{2}$, replace $\{u,v\}$ by either $(u,v)$ or $(v,u)$ and add an extra new vertex joined to $u$ and $v$ by two non-oriented edges. Let the set of new vertices be $W$. Then R\'{e}dei's Stronger Theorem implies Dirac's Corollary 3.

The above shows that R\'{e}dei's Stronger Theorem is equivalent with Dirac's Corollary 3 (in fact with its case of a complete mixed graph).

\section{Berge's Stronger Theorem}
 
Let $G$ be a mixed graph as before. Moreover, let $\Pe_{0}$ denote the permutations without elements from $\overbar{E_{3}}$, let $\Pe_{i}$ the permutations without elements from $E_{i} \cup \overbar{E_{3}}$ for $i = 1$ and $2$, and let $\Pe_{3}$ the permutations without elements from $E_{1} \cup E_{2} \cup \overbar{E_{3}}$ (in other words $\Pe_{3} = \Pe_{1} \cap \Pe_{2})$; see Figure~\ref{figure:setsPi}.

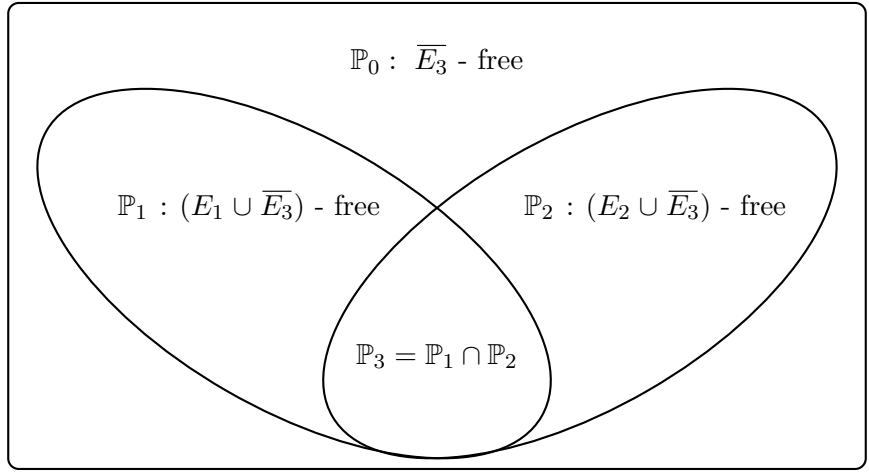
\begin{figure}[htbp]
\centering
\begin{tikzpicture}[thick]
  \draw[rounded corners=4pt] (-2.2,-2.6) rectangle (9.2,3.6);

  \node[anchor=north] at (3.5,3.2) {$\Pe_{0}:~\overbar{E_{3}}\text{ - free}$};

  \begin{scope}[shift={(1.6,0)},rotate=-30]   \draw (0,0) ellipse (3.8cm and 1.8cm); \end{scope}
  \begin{scope}[shift={(5.4,0)},rotate=30]  \draw (0,0) ellipse (3.8cm and 1.8cm); \end{scope}

  \node[align=center,text width=4.2cm] at (1,0.9)
        {$\Pe_{1}:(E_{1}\cup \overbar{E_{3}})\text{ - free}$};
  \node[align=center,text width=4.2cm] at (6.4,0.9)
        {$\Pe_{2}:(E_{2}\cup \overbar{E_{3}})\text{ - free}$};

  \node at (3.5,-1.1) {$\Pe_{3}=\Pe_{1}\cap \Pe_{2}$};
\end{tikzpicture}

%
%
\caption{The sets $\Pe_i \subseteq \Pe(G)$ with $i \in \{0,1,2,3\}$.}
\label{figure:setsPi}      
\end{figure}

With this notation Dirac's Corollary 2 (Theorem~\ref{theorem:diraccorollary2}) is the statement that $|\Pe_{0}| - |\Pe_{3}|$ is even, and hence that $|\Pe_{0}| + |\Pe_{3}|$ is even. Berge's Stronger Theorem (Theorem~\ref{theorem:bergestrong}) is that $|\Pe_{1}| + |\Pe_{2}|$ is even (note that the number of permutations without elements from $E_{2} \cup E_{3}$ equals $|\Pe_{2}|$ by reversing the orders of the permutations). We do not see an easy argument getting from one of these two results to the other. But note that if $G$ is complete (that is $E_{1} = \ems$) then $|\Pe_{0}| = |\Pe_{1}|$ and $|\Pe_{2}| = |\Pe_{3}|$, and Dirac's result and Berge's result are the same. Similarly, when $E_{2} = \ems$.

The above results of Dirac and Berge may be formulated together as follows. 

\begin{theorem} \head{(Berge-Dirac Theorem)}
Let $G$ be a mixed graph with $n \geq 2$ vertices. Let $\Pe_{0}$ denote the set of permutations of $G$ without elements from $\overbar{E_{3}}$, let $\Pe_{i}$ denote the set of permutations of $G$ without elements from $E_{i} \cup \overbar{E_{3}}$ $(i\in \{1,2\})$, and let $\Pe_{3} = \Pe_{1} \cap \Pe_{2}$. Then, either $\Pe_0$, $\Pe_1$, $\Pe_2$, $\Pe_3$ are all even; or they are all odd;
or $\Pe_0$, $\Pe_3$ are both even and $\Pe_1$, $\Pe_2$ are both odd; or $\Pe_0$, $\Pe_3$ are both odd and $\Pe_1$, $\Pe_2$ are both even. Each of the four cases occurs for infinitely many mixed graphs. In particular, $|\Pe_{0}| + |\Pe_{1}| + |\Pe_{2}| + |\Pe_{3}|$  is even.
\end{theorem}

We do not know any simpler proof of the Berge-Dirac Theorem than the above combination. Adding a new vertex to a mixed graph $G$ from one of the four cases and adding edges directed from the new vertex to all vertices of $G$ results in a new mixed graph from the same case. Thus it is enough to exhibit just one mixed graph for each case. An undirected edge is a graph of case 1; a directed edge is a graph of case 2; a graph on three vertices with one undirected edge, one directed edge and one missing edge is a graph of case 3; finally a graph on five vertices consisting of a directed 5-cycle and three undirected cords forming a path of length 3 and two missing cords from the same vertex is a graph of case 4. 

Note that in the Berge-Dirac formula we may replace any + by a -, for example
\begin{center}
   $|\Pe_{0}| - |\Pe_{1}| - |\Pe_{2}| + |\Pe_{3}|$ is always even
\end{center}
which means that the number of permutations containing at least one element from $E_{1}$, at least one element from $E_{2}$, and no element from $\overbar{E_{3}}$ is even.

\end{document}